\numberwithin{equation}{section} \theoremstyle{plain}
\newtheorem{theorem}{Theorem}[section]
\newtheorem{lemma}[theorem]{Lemma}
\theoremstyle{definition}
\newtheorem{example}{Example}[section]
\theoremstyle{remark}
\newtheorem{remark}{\rm\bf Remark}[section]
\newcommand{\qbinom}[2]{\left[\genfrac{}{}{0pt}{}{#1}{#2}\right]_{q}}
\begin{document}

\title{\bf On the eigenstructure of the $(\alpha,q)$-Bernstein operator}
\author{B\"ulent K\"oro\u{g}lu$^1\footnote{Corresponding author}$ \ and Fatma Ta\c{s}delen Ye\c{s}ildal$^2$}
\date{}
\maketitle

\begin{center}
{\it $^1$Social Security Institution, Data Center, Yenimahalle, Ankara, Turkey \\ e-mail: koroglubulent3@gmail.com}\\
{\it $^2$Department of Mathematics, Ankara University 06100, Beşevler, Ankara, Turkey \\ e-mail: tasdelen@science.ankara.edu.tr}\\
\end{center}

\begin{abstract}
We obtain eigenvalues and eigenvectors of the $(\alpha,q)$-Bernstein operator $T_{n,q,\alpha}$. Moreover, we will give the limit behaviour of these eigenvalues and eigenvectors for all $q.$ 
\end{abstract}

{\bf Keywords}: $q$-calculus;  $(\alpha,q)$-Bernstein polynomials; Eigenvalues; Eigenvectors.

%{\bf 2010 MSC:}  

\section{Introduction}
The well-known Weierstrass Approximation Theorem, proved by Karl Weierstrass in 1885, states that for any continuous function $f$ defined in interval $[a,b]$ and $\epsilon>0$, there exists a polynomial $P$ such that $|f(x)-P(x)|<\epsilon$. Since the proof of the theorem is lengthy and complicated, many researchers studied to find simple and effective proof. In 1912, using probability theory, S.N Bernstein developed Bernstein polynomials and published rather simple and effective proof for Weierstrass Approximation Theorem \cite{Bern}. In 1987, Lupa\c{s} has defined $q$-analogue of the Bernstein polynomials for rational functions \cite{lupas}. However, in 1997, Phillips has developed $q$-analogue of polynomial functions such that for $q=1,$ classical Bernstein operators are obtained \cite{phil}. After this date, many researchers studied $q$-Bernstein operators and with the help of this operator, $q$-analogues of many other operators are obtained 
\cite{jat,KR67,novikov,jat2,decade,parametric,wangnorms}.

Many researchers have studied the eigenstructure of the extensions of Bernstein and $q$-Bernstein operators. See, for example, \cite{goska2, goska, margareta,  kernel, wng}. The present work is based on the studies by S. Cooper and S. Waldron in 2000 related to the eigenstructure of the classical and multi-variable Bernstein operators \cite{waldron, waldron2}. Eigenfunctions of the $q$-Bernstein operators and their asymptotic behavior are investigated by S. Ostrovska and M.Turan in 2013 \cite{memocan}. In this study, eigenvalues and eigenfunctions of the $(\alpha,q)$-Bernstein operators introduced by Qing-Bo Cai and Xiao-Wei Xu in \cite{qingbo} are found and their asymptotic behavior is investigated. When $q=1,$ one obtains the eigenvalues and eigenfunctions of the $\alpha$-Bernstein operator introduced by Chen et. al \cite{xia}.

\section{Preliminaries}

The definitions and notations used in this article are adopted from \cite[Ch.10]{askey}.
Let $q>0.$ The $q$-integer is defined by

\begin{equation}\label{w1}
[n]_q:=1+q+q^2+\dots+q^{n-1},\quad [0]_q:=0 \qquad(n=1,2,\dots),
\end{equation} 
the $q$-factorial of $n$ by
\begin{align*}
[n]_q!:=[1]_q[2]_q\dots [n]_q,\quad[0]_q!:=1 \qquad(n=1,2,\dots).
\end{align*} 
From \eqref{w1}, one can easily see that 
\begin{eqnarray}\label{prop1}
[n-i]_q=\frac{[n]_q-[i]_q}{q^i}, \quad i=0,1,\ldots,n,
\end{eqnarray}
For integers $k$ and $n$ with $0\leq k\leq n$, the $q-$ binomial coefficient is 
\begin{equation*}
\qbinom{n}{k}=\frac{[n]_q!}{[k]_q![n-k]_q!}
\end{equation*}
The $q$-shifted products defined by
$$(a;q)_0=1, \quad (a;q)_k=\prod_{s=0}^{k-1}(1-aq^s),\quad (a;q)_\infty=\prod_{s=0}^{\infty}(1-aq^s).$$
We also need the $q$-Stirling numbers of the second kind 
$S_q(k,r)$ given by
\begin{align}\label{str1}
S_q(k,r)=\frac{1}{[r]_q!\, q^{r(r-1)/2}}\sum_{i=0}^r(-1)^iq^{\frac{i(i-1)}{2}}\qbinom{r}{i}[r-i]_q^k.
\end{align}
Using the induction on $k$, one can verify that 
\begin{align}\label{str2}
S_q(k+1,r)=S_q(k,r-1)+[r]_qS_q(k,r), \qquad k\geq0,\: r\geq1
\end{align}
with $S_q(0,0)=1$, $S_q(k,0)=0$ for $k>0$ and $S_q(k,r)=0$ for $k<r.$

The $\alpha$-Bernstein polynomial of $f:[0,1]\rightarrow R$ is introduced in \cite{xia} as
\begin{equation}\label{tnf}
T_{n,\alpha}(f;x)=\sum_{i=0}^n\,f\left(\frac{i}{n}\right)p_{n,i}^{(\alpha)}(x)
\end{equation}
where  $p_{n,i}^{(\alpha)}(x)$ are the $\alpha$-Bernstein polynomials of degree $n$ given by
$p_{1,0}^{(\alpha)}(x)=1-x$, $p_{1,1}^{(\alpha)}(x)=x$ and for $n\geq 2$,
\begin{align*}
p_{n,i}^{(\alpha)}(x)&=\left[\binom{n-2}{i}(1-\alpha)x+\binom{n-2}{i-2}(1-\alpha)(1-x)+\binom{n}{i}\alpha x(1-x)\right]x^{i-1}(1-x)^{n-1-i}.
\end{align*}
Here $\binom{n}{i}$ stands for the binomial coefficients and one has $\binom{n}{i}=0$ for $i<0$ or $i>n.$

The $\alpha$-Bernstein operator $T_{n,\alpha}$ on $C[0,1]$ is given by $$ T_{n,\alpha}:f \rightarrow T_{n,\alpha}f $$
Using the forward difference operator, \eqref{tnf} can also be written as 

\begin{equation*}
T_{n,\alpha}(f;x)=\sum_{r=0}^n\left[(1-\alpha)\binom{n-1}{r}\Delta^rg_0+\alpha\binom{n}{r}\Delta^rf_0\right]x^r
\end{equation*}
where 
$$
f_i=f\left(\frac{i}{n}\right), \quad g_i=\left(1-\frac{i}{n-1}\right)f_i+\frac{i}{n-1}f_{i+1}
$$
and $\Delta^{0}f_i=f_i$, $\Delta^{r}f_i=\Delta^{r-1}f_{i+1}-\Delta^{r-1}f_i$ for $r\geq 1.$

Lemma 3.1 in \cite{xia} states that the higher-order forward difference of $g_i$ can be expressed as 
\begin{equation*}
\Delta^rg_i=\left(1-\frac{i}{n-1}\right)\Delta^rf_i+\frac{i+r}{n-1}\Delta^rf_{i+1},
\end{equation*}
The $q$-analogue of $\alpha$-Bernstein operators, called $(\alpha,q)$-Bernstein operators are defined in \cite{qingbo} as 
$T_{n,q,\alpha}:f\rightarrow T_{n,q,\alpha}(f;.)$ such that
$$
T_{n,q,\alpha}(f;x)=\sum_{i=0}^n\,f\left(\frac{[i]_q}{[n]_q}\right)p_{n,q,i}^{(\alpha)}(x),
$$
where $p_{n,q,i}^{(\alpha)}(x)$ are the basis $(\alpha,q)$-Bernstein polynomials of degree $n$ given by 
$p_{1,q,0}^{(\alpha)}(x)=1-x$, $p_{1,q,1}^{(\alpha)}(x)= x$ and 
\begin{align*}
p_{n,q,i}^{(\alpha)}(x)=\bigg(\qbinom{n-2}{i}\frac{(1-\alpha)x}{1-q^{n-i-1}x}+\qbinom{n-2}{i-2}(1-\alpha)q^{n-i-2}
+\qbinom{n}{i}\alpha x\bigg)x^{i-1}(x;q)_{n-i}.
\end{align*}
Like $\alpha$-Bernstein polynomials, the $(\alpha,q)$-Bernstein polynomials have the representation using the forward difference operators as 
\begin{equation}\label{Tnqo}
T_{n,q,\alpha}(f;x)=\sum_{r=0}^n\bigg((1-\alpha)\qbinom{n-1}{r}\Delta_q^rg_0+\alpha\qbinom{n}{r}\Delta_q^rf_0\bigg)x^r
\end{equation}
where 
$\Delta_q^{0}f_i=f_i,$ $\Delta_q^{r}f_i=\Delta_q^{r-1}f_{i+1}-q^{r-1}\Delta_q^{r-1}f_i$ for $r\geq 1.$
Also, the higher-order forward difference of $g_i$ can be expressed as, see \cite[Lemma 2.4]{qingbo},
\begin{equation}\label{delqr}
\Delta^r_qg_i=\left(1-\frac{q^{n-i-1}[i]_q}{[n-1]_q}\right)\Delta^r_qf_i+\frac{q^{n-i-1-r}[i+r]_q}{[n-1]_q}\Delta^r_qf_{i+1}.
\end{equation}
It is worth mentioning that when $\alpha=1,$ the $\alpha$-Bernstein and $(\alpha, q)$-Bernstein polynomials become Bernstein and $q$-Bernstein polynomials, respectively.

\section{Main Results}
Since the $\alpha$-Bernstein polynomials are obtained from $(\alpha,q)$-Bernstein polynomials as $q\to 1,$ we will present only the results for the latter one. Similar results can be derived for $\alpha$-Bernstein polynomials by taking the limit as $q\to 1.$

It is known that $(\alpha,q)$-Bernstein polynomials possess some properties of $q$-Bernstein polynomials.
For example, see \cite{xia} and \cite{qingbo}, they have the end-point interpolation property:
\begin{align*}
T_{n,q,\alpha}(f;0)=f(0), \quad T_{n,q,\alpha}(f;1)=f(1),\quad n=1, 2, \dots, \;q>0,
\end{align*}
and leave the functions invariant:
\begin{equation*}
T_{n,q,\alpha}(at+b;x)=ax+b,\quad n=1,2,\dots, \;q>0.
\end{equation*}
Moreover, they are degree reducing on polynomials, that is, $T_{n,q,\alpha}(t^k;x)$ is a polynomial of degree $\min\{n,k\}.$ This implies that, for $k\leq n$, the subspace ${\mathcal P}_k$ of polynomials of degree at most $k$ is invariant under $T_{n,q,\alpha}.$

To be specific, it is known (see \cite{qingbo}) that 
\begin{eqnarray*}
T_{n,q,\alpha}(t^k;x)=a_{k}x^k+a_{k-1}x^{k-1}+\cdots+a_{1}x,
\end{eqnarray*}
where 
$$
a_{k}=\frac{q^{k(k-1)/2} [n-2]_q!}{[n-k]_q![n]_q^k}\left\{(1-\alpha)[n-k]_q[n-1+k]_q+\alpha[n]_q[n-1]_q\right\}.
$$
The above representation gives only an explicit formula for the leading coefficient. However, in our study, we need all 
coefficients explicitly which are given in the following lemma.

\begin{lemma}
Let $f(t)=t^k.$ Then, for $r\leq k$ and $i=0,1,\ldots,n,$ one has 
\begin{align}\label{lemqdif}
\Delta_q^{r}f_i=\frac{1}{[n]_q^k}\sum_{s=0}^r (-1)^s q^{s(s-1)/2} \qbinom{r}{s} [i+r-s]_q^k. 
\end{align}
\end{lemma}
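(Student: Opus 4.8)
The plan is to establish \eqref{lemqdif} by induction on $r$, exploiting the defining recurrence $\Delta_q^{r}f_i=\Delta_q^{r-1}f_{i+1}-q^{r-1}\Delta_q^{r-1}f_i$. Since $f(t)=t^k$ yields $f_i=\big([i]_q/[n]_q\big)^k=[i]_q^k/[n]_q^k$, the constant factor $[n]_q^{-k}$ pulls out of every $q$-difference, so it suffices to prove that the sequence $u_i:=[i]_q^k$ satisfies
\begin{equation*}
\Delta_q^{r}u_i=\sum_{s=0}^r(-1)^sq^{s(s-1)/2}\qbinom{r}{s}[i+r-s]_q^k.
\end{equation*}
The restriction $r\le k$ plays no role in the argument itself; the identity in fact holds for every $r\ge0$, and $r\le k$ is only the range relevant to the intended application.

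The base case $r=0$ is immediate, and $r=1$ recovers $\Delta_q u_i=u_{i+1}-u_i=[i+1]_q^k-[i]_q^k$. For the inductive step I would assume the displayed formula for a fixed $r$ and substitute it into $\Delta_q^{r+1}u_i=\Delta_q^{r}u_{i+1}-q^{r}\Delta_q^{r}u_i$. This produces two sums,
\begin{equation*}
\sum_{s=0}^r(-1)^sq^{s(s-1)/2}\qbinom{r}{s}[i+r+1-s]_q^k
-q^r\sum_{s=0}^r(-1)^sq^{s(s-1)/2}\qbinom{r}{s}[i+r-s]_q^k,
\end{equation*}
which I want to merge into a single sum over the $q$-powers $[i+r+1-s]_q^k$. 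Reindexing the second sum by $t=s+1$ turns its typical term into $[i+r+1-t]_q^k$ with coefficient $(-1)^tq^{r}q^{(t-1)(t-2)/2}\qbinom{r}{t-1}$, so that, after renaming $t$ back to $s$, the coefficient of $[i+r+1-s]_q^k$ becomes
\begin{equation*}
(-1)^sq^{s(s-1)/2}\qbinom{r}{s}+(-1)^sq^{s(s-1)/2}\,q^{\,r+1-s}\qbinom{r}{s-1},
\end{equation*}
where I have used $(s-1)(s-2)/2-s(s-1)/2=1-s$ to collect the powers of $q$.

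The crux is then the observation that the bracketed $q$-binomials collapse through the $q$-Pascal rule
\begin{equation*}
\qbinom{r+1}{s}=\qbinom{r}{s}+q^{\,r+1-s}\qbinom{r}{s-1},
\end{equation*}
which turns the coefficient of $[i+r+1-s]_q^k$ into exactly $(-1)^sq^{s(s-1)/2}\qbinom{r+1}{s}$, completing the induction. The main obstacle I anticipate is purely the bookkeeping of exponents: after the shift $t=s+1$ one must track the extra factor $q^r$ together with the change in the Gaussian exponent $q^{s(s-1)/2}$, and verify that the residual power is precisely $q^{\,r+1-s}$, so that the correct variant of $q$-Pascal (the one with $q^{\,r+1-s}$ rather than $q^{s}$) is the one that applies. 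The boundary terms cause no difficulty, since $\qbinom{r}{-1}=\qbinom{r}{r+1}=0$ make the combined coefficient self-consistent at $s=0$ and $s=r+1$.
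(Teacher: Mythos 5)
Your proof is correct. The induction on $r$ via the recurrence $\Delta_q^{r+1}u_i=\Delta_q^{r}u_{i+1}-q^{r}\Delta_q^{r}u_i$ works exactly as you describe: after the shift $t=s+1$ the exponent bookkeeping gives $r+(s-1)(s-2)/2=s(s-1)/2+(r+1-s)$, and the variant of the $q$-Pascal rule $\qbinom{r+1}{s}=\qbinom{r}{s}+q^{\,r+1-s}\qbinom{r}{s-1}$ is indeed the one that closes the induction; the boundary cases are handled by the vanishing conventions for out-of-range $q$-binomials. The paper, by contrast, does not prove anything here: its entire proof is the citation ``Take $f(t)=t^k$ in formula (2.1) of Phillips (1997),'' i.e.\ it invokes the known explicit formula for $q$-differences of $q$-Bernstein nodes. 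Your argument is essentially the standard derivation of that cited formula, so it is not a different route mathematically, but it has the merit of being self-contained and of making explicit that the identity is a purely formal statement about $\Delta_q^r$ applied to an arbitrary sequence (so that, as you note, the hypothesis $r\le k$ and the specific choice $f(t)=t^k$ are irrelevant to its validity, the factor $[n]_q^{-k}$ simply pulling out by linearity).
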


\begin{proof} Take $f(t)=t^k$ in \cite[formula (2.1)]{phil}.
\end{proof}

\begin{lemma}
For $n\geq k \geq 1$, one has 
\begin{equation*}
T_{n,q,\alpha}(t^k;x)=\sum_{r=0}^{k}a(r,k)x^r
\end{equation*}
where 
\begin{multline}
a(r,k)=\frac{q^{\frac{r(r-1)}{2}}[n-2]_q!}{[n]_q^k[n-r]_q!}\bigg\{(1-\alpha)[n-r]_q\bigg([n+r-1]_qS_q(k+1,r+1) \\
-[r+1]_q[n-1]_qS_q(k,r+1)\bigg)+\alpha[n]_q[n-1]_qS_q(k,r)\bigg\}.\label{ark}
\end{multline}
\end{lemma}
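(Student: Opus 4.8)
The plan is to identify the coefficient $a(r,k)$ directly from the forward-difference representation \eqref{Tnqo} and then rewrite every forward difference of $f(t)=t^k$ in terms of $q$-Stirling numbers. Reading off the coefficient of $x^r$ in \eqref{Tnqo} gives
\begin{equation*}
a(r,k)=(1-\alpha)\qbinom{n-1}{r}\Delta_q^r g_0+\alpha\qbinom{n}{r}\Delta_q^r f_0,
\end{equation*}
so the entire problem reduces to evaluating $\Delta_q^r f_0$, $\Delta_q^r f_1$ and $\Delta_q^r g_0$.

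First I would evaluate the two base-point differences. Putting $i=0$ in \eqref{lemqdif} and comparing the resulting sum with the definition \eqref{str1} of $S_q(k,r)$ gives immediately $\Delta_q^r f_0=\dfrac{[r]_q!\,q^{r(r-1)/2}}{[n]_q^k}S_q(k,r)$. Rather than re-summing \eqref{lemqdif} for $i=1$, I would use the defining recurrence $\Delta_q^{r+1}f_0=\Delta_q^r f_1-q^r\Delta_q^r f_0$ to write $\Delta_q^r f_1=\Delta_q^{r+1}f_0+q^r\Delta_q^r f_0$. Substituting the Stirling form of $\Delta_q^{r+1}f_0$ (which carries $S_q(k,r+1)$) and of $\Delta_q^r f_0$, collecting the common power $q^{r(r+1)/2}$, and applying the recurrence \eqref{str2} in the form $[r+1]_q S_q(k,r+1)+S_q(k,r)=S_q(k+1,r+1)$ collapses the two terms into $\Delta_q^r f_1=\dfrac{[r]_q!\,q^{r(r+1)/2}}{[n]_q^k}S_q(k+1,r+1)$.

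Next I would feed these into \eqref{delqr} at $i=0$. Since $[0]_q=0$ the first coefficient there is $1$, so $\Delta_q^r g_0=\Delta_q^r f_0+\dfrac{q^{n-1-r}[r]_q}{[n-1]_q}\Delta_q^r f_1$; after the power of $q$ on the second term is simplified (it reduces to an overall factor $q^{n-1}$) this becomes
\begin{equation*}
\Delta_q^r g_0=\frac{[r]_q!\,q^{r(r-1)/2}}{[n]_q^k}\left(S_q(k,r)+\frac{q^{n-1}[r]_q}{[n-1]_q}S_q(k+1,r+1)\right).
\end{equation*}
It then remains to substitute the three difference formulas into the expression for $a(r,k)$ and to simplify the $q$-factorial ratios via $\qbinom{n-1}{r}=\dfrac{[n-1]_q[n-2]_q!\,[n-r]_q}{[r]_q!\,[n-r]_q!}$ and $[n]_q!=[n]_q[n-1]_q[n-2]_q!$. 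This makes the common prefactor $\dfrac{q^{r(r-1)/2}[n-2]_q!}{[n]_q^k[n-r]_q!}$ appear in both contributions, and the $\alpha$-term then matches the claimed $\alpha[n]_q[n-1]_q S_q(k,r)$ at once.

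I expect the only genuine obstacle to be the final identity inside the $(1-\alpha)$ bracket, namely
\begin{equation*}
[n-1]_q S_q(k,r)+q^{n-1}[r]_q S_q(k+1,r+1)=[n+r-1]_q S_q(k+1,r+1)-[r+1]_q[n-1]_q S_q(k,r+1).
\end{equation*}
I would prove this by eliminating $S_q(k,r)$ through \eqref{str2} written as $S_q(k,r)=S_q(k+1,r+1)-[r+1]_q S_q(k,r+1)$, and then invoking the elementary $q$-integer identity $[n-1]_q+q^{n-1}[r]_q=[n+r-1]_q$, which is immediate from \eqref{w1}. Both sides then collapse to $[n+r-1]_q S_q(k+1,r+1)-[r+1]_q[n-1]_q S_q(k,r+1)$, which finishes the verification of \eqref{ark}.
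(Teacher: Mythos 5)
Your proposal is correct and follows essentially the same route as the paper's own proof: extracting $a(r,k)$ from \eqref{Tnqo}, computing $\Delta_q^r f_0$, $\Delta_q^r f_1$ and $\Delta_q^r g_0$ in $q$-Stirling form via \eqref{lemqdif}, \eqref{str2} and \eqref{delqr}, and then collapsing the $(1-\alpha)$ bracket using $S_q(k,r)=S_q(k+1,r+1)-[r+1]_qS_q(k,r+1)$ together with $[n-1]_q+q^{n-1}[r]_q=[n+r-1]_q$. All the key identities you single out are exactly the ones the paper uses.
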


\begin{proof} From \eqref{Tnqo} one has 
\begin{align*}
a(r,k)=(1-\alpha)\qbinom{n-1}{r}\Delta_q^rg_0+\alpha\qbinom{n}{r}\Delta_q^rf_0.
\end{align*}
For $i=0$, \eqref{lemqdif} becomes
\begin{align*}
\Delta_q^{r}f_0=\frac{1}{[n]_q^k}\sum_{s=0}^r (-1)^s q^{s(s-1)/2} \qbinom{r}{s} [r-s]_q^k,
\end{align*}
and using \eqref{str1}, we get 
\begin{align*}
\Delta_q^rf_0=\frac{[r]_q!q^{r(r-1)/2}}{[n]_q!}S_q(k,r).
\end{align*}
As $\Delta_q^{r+1}f_0=\Delta_q^rf_1-q^r\Delta_q^rf_0$, we have 
\begin{align*}
\Delta_q^rf_1&=\Delta_q^{r+1}f_0+q^r\Delta_q^rf_0\\
             &=\frac{[r]_q!q^{r(r+1)/2}}{[n]^k_q}([r+1]_qS_q(k,r+1)+S_q(k,r))\\
						 &=\frac{[r]_q!q^{r(r+1)/2}}{[n]^k_q}S_q(k+1,r+1)
\end{align*}
Also, $i=0$ in \eqref{delqr} results in
\begin{align*}
\Delta_q^rg_0&=\Delta_q^rf_0+\frac{q^{n-1-r}[r]_q}{[n-1]_q}\Delta_q^rf_1\\
						 &=\frac{[r]_q!q^{\frac{r(r-1)}{2}}}{[n]_q^k}\bigg(S_q(k,r)+\frac{q^{n-1}[r]_q}{[n-1]_q}S_q(k+1,r+1)\bigg)
\end{align*}
Therefore, 
\begin{align*}
a(r,k)&=\frac{[r]_q!q^{\frac{r(r-1)}{2}}}{[n]_q^k}\bigg\{(1-\alpha)\qbinom{n-1}{r}\left(S_q(k,r)+\frac{q^{n-1}[r]_q}{[n-1]_q}S_q(k+1,r+1) \right)+\alpha\qbinom{n}{r}S_q(k,r)\bigg\}\\
&=\frac{q^{\frac{r(r-1)}{2}}}{[n]_q^k}\bigg\{(1-\alpha)\frac{[n-1]_q!}{[n-r-1]_q!}\left(S_q(k,r)+\frac{q^{n-1}[r]_q}{[n-1]_q}S_q(k+1,r+1)\right)+\alpha\frac{[n]_q!}{[n-r]_q!}S_q(k,r)\bigg\}.
\end{align*}
Using \eqref{str2}, the last equality becomes 
\begin{align*}
a(r,k)
&=\frac{q^{\frac{r(r-1)}{2}}[n-2]_q!}{[n]_q^k[n-r]_q!}\bigg\{(1-\alpha)[n-1]_q[n-r]_q\bigg(S_q(k+1,r+1)-[r+1]_qS_q(k,r+1)\\
&\qquad+\frac{q^{n-1}[r]_q}{[n-1]_q}S_q(k+1,r+1)\bigg)+\alpha[n]_q[n-1]_qS_q(k,r)\bigg\}\\
&=\frac{q^{\frac{r(r-1)}{2}}[n-2]_q!}{[n]_q^k[n-r]_q!}\bigg\{\bigg((1-\alpha)[n-r]_q\big([n-1]_q+q^{n-1}[r]_q\big)S_q(k+1,r+1)\\
&\qquad -[r+1]_q[n-1]_qS_q(k,r+1)\bigg)+\alpha[n]_q[n-1]_qS_q(k,r)\bigg\}\\
&=\frac{q^{\frac{r(r-1)}{2}}[n-2]_q!}{[n]_q^k[n-r]_q!}\bigg\{(1-\alpha)[n-r]_q\bigg([n+r-1]_qS_q(k+1,r+1)\\
&\qquad-[r+1]_q[n-1]_qS_q(k,r+1)\bigg)+\alpha[n]_q[n-1]_qS_q(k,r)\bigg\}
\end{align*}
which is \eqref{ark} as claimed.
\end{proof}
\begin{lemma}\label{eigdifq}
The numbers 
\begin{align*}
\lambda_{k,q}^{(\alpha,n)}=\frac{q^{\frac{k(k-1)}{2}}[n-2]_q!}{[n-k]_q![n]_q^k}(\left(1-\alpha)[n-k]_q[n-1+k]_q+\alpha[n]_q[n-1]_q\right)
\end{align*}
are distinct for $\alpha\in[0,1]$ and $k=2,\ldots,n$.
\end{lemma}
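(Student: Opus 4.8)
The plan is to exploit that $\lambda_{k,q}^{(\alpha,n)}$ is an affine function of $\alpha$. Setting
\[
A_k:=\frac{q^{k(k-1)/2}[n-2]_q!}{[n-k]_q![n]_q^k}\,[n-k]_q[n+k-1]_q,\qquad
B_k:=\frac{q^{k(k-1)/2}[n]_q!}{[n-k]_q![n]_q^k},
\]
one checks at once (using $[n-2]_q![n]_q[n-1]_q=[n]_q!$) that $A_k=\lambda_{k,q}^{(0,n)}$, $B_k=\lambda_{k,q}^{(1,n)}$, and
\[
\lambda_{k,q}^{(\alpha,n)}=(1-\alpha)A_k+\alpha B_k,\qquad \alpha\in[0,1].
\]
I would then reduce the whole statement to proving that the two endpoint sequences $(A_k)_{k=2}^{n}$ and $(B_k)_{k=2}^{n}$ are each \emph{strictly decreasing}. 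Indeed, for $2\le j<k\le n$ the difference
\[
\lambda_{j,q}^{(\alpha,n)}-\lambda_{k,q}^{(\alpha,n)}=(1-\alpha)(A_j-A_k)+\alpha(B_j-B_k)
\]
is then a convex combination of two strictly positive numbers, hence strictly positive for every $\alpha\in[0,1]$; in particular $\lambda_{j,q}^{(\alpha,n)}\neq\lambda_{k,q}^{(\alpha,n)}$.

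The case $\alpha=1$ is the easy one: the $B_k$ are the eigenvalues of the $q$-Bernstein operator, and a direct ratio computation using $q^k[n-k]_q=[n]_q-[k]_q$ from \eqref{prop1} gives
\[
\frac{B_{k+1}}{B_k}=q^{k}\,\frac{[n-k]_q}{[n]_q}=1-\frac{[k]_q}{[n]_q}\in(0,1)\qquad(1\le k\le n-1),
\]
since $0<[k]_q<[n]_q$ for $k<n$ and all $q>0$; thus $B_2>B_3>\cdots>B_n>0$.

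For $\alpha=0$ the same idea applies, but the positivity of the numerator is the crux. For $1\le k\le n-2$ both $A_k,A_{k+1}$ are positive and $A_{k+1}<A_k$ is equivalent to the $q$-inequality
\[
q^{k}[n+k]_q[n-k-1]_q<[n]_q[n+k-1]_q .
\]
I would establish it by clearing denominators (writing each $q$-integer as $(q^m-1)/(q-1)$), collecting terms, and factoring out $(q-1)^2$, which produces the clean identity
\[
[n]_q[n+k-1]_q-q^{k}[n+k]_q[n-k-1]_q=[k]_q[n+k-1]_q+q^{n-1}[2k]_q ,
\]
whose right-hand side is manifestly positive for $k\ge1$ and all $q>0$. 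The remaining boundary case $k=n-1$ is immediate, because $A_n=0$ (the factor $[n-k]_q=[0]_q$ vanishes) while $A_{n-1}>0$; hence $A_2>A_3>\cdots>A_n=0$.

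Combining the two monotonicity statements with the reduction of the first paragraph completes the proof, and in fact shows the stronger fact that $k\mapsto\lambda_{k,q}^{(\alpha,n)}$ is strictly decreasing on $\{2,\dots,n\}$ for every $\alpha\in[0,1]$. The main obstacle is the positivity of the $\alpha=0$ numerator; the factorization displayed above is precisely what makes it work uniformly in $q$, avoiding a separate case analysis for $q<1$, $q=1$, and $q>1$.
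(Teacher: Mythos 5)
Your proof is correct, and it establishes the same stronger conclusion as the paper's argument (strict decrease of $k\mapsto\lambda_{k,q}^{(\alpha,n)}$ on $\{2,\dots,n\}$), but it is organized around a genuinely different key step. The paper also starts from the affine dependence on $\alpha$: its identity \eqref{43} is precisely your decomposition $\lambda_{k,q}^{(\alpha,n)}=(1-\alpha)A_k+\alpha B_k$ with $B_k=\prod_{m=1}^{k-1}\bigl(1-[m]_q/[n]_q\bigr)$ factored out. However, instead of treating the two endpoint sequences separately, the paper bounds the ratio $\lambda_{k,q}^{(\alpha,n)}/\lambda_{k-j,q}^{(\alpha,n)}$ for an arbitrary gap $j$: the partial product $\prod_{m=k-j}^{k-1}\bigl(1-[m]_q/[n]_q\bigr)$ is strictly less than $1$, and the remaining $\alpha$-dependent fraction is at most $1$ because $[n-k]_q[n+k-1]_q\le[n-k+j]_q[n+k-j-1]_q$, which it reduces to the sign condition $(q^j-1)(q^{2k-j-1}-1)\ge0$. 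Your route replaces that two-line sign argument by a consecutive-ratio computation for $A_k$ together with the identity $[n]_q[n+k-1]_q-q^{k}[n+k]_q[n-k-1]_q=[k]_q[n+k-1]_q+q^{n-1}[2k]_q$, which I verified (it follows from $q^{k}[n-k-1]_q=[n-1]_q-[k]_q$ and $[k]_q(1+q^{k})=[2k]_q$), and by the observation $A_n=0$, $B_{k+1}/B_k=1-[k]_q/[n]_q$. What your version buys: both endpoint sequences are strictly decreasing, so positivity of $\lambda_{j,q}^{(\alpha,n)}-\lambda_{k,q}^{(\alpha,n)}$ is immediate for every $\alpha\in[0,1]$ without forming a quotient, and your identity is termwise positive in $q$, so the point $q=1$ needs no separate attention (the paper's chain of equivalences multiplies through by $(1-q)^2$ and degenerates there). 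What the paper's version buys: it only needs a non-strict inequality for the $\alpha=0$ factor, since strictness is carried entirely by the product term, making the verification shorter.
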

\begin{proof} One can write 
\begin{eqnarray}\label{43}
\lambda_{k,q}^{(\alpha,n)}=\left(\alpha+(1-\alpha)\frac{[n-k]_q[n+k-1]_q}{[n]_q[n-1]_q}\right)\prod_{m=1}^{k-1}\left(1-\frac{[m]_q}{[n]_q}\right). 
\end{eqnarray}
Dividing \eqref{43} by $\lambda_{k-j,q}^{(\alpha,n)}$ for $j=1,2,\ldots,k-1$, we get
\begin{align*}
\frac{\lambda_{k,q}^{(\alpha,n)}}{\lambda_{k-j,q}^{(\alpha,n)}}&=\left(\frac{(1-\alpha)[n-k]_q[n+k-1]_q+\alpha[n]_q [n-1]_q}{(1-\alpha)[n-k+j]_q[n+k-j-1]_q+\alpha[n]_q [n-1]_q}\right)
\prod_{m=k-j}^{k-1}\left(1-\frac{[m]_q}{[n]_q}\right).
\end{align*}
Obviously 
$$
\prod_{m=k-j}^{k-1}\left(1-\frac{[m]_q}{[n]_q}\right)<1.
$$
To complete the proof, note that
\begin{align*}
&[n-k]_q[n+k-1]_q\leq[n-k+j]_q[n+k-j-1]_q\\
\Leftrightarrow&(1-q^{n-k})(1-q^{n+k-1})\leq(1-q^{n-k+j})(1-q^{n+k-j-1})\\
\Leftrightarrow&(q^j-1)(q^{2k-j-1}-1)\geq 0
\end{align*}
for all $q>0$ and $j=1,2,\ldots,k-1.$
\end{proof}
\begin{remark}
It is worth mentioning that the numbers $\lambda^{(\alpha,n)}_{k,q}$ are the leading coefficients of $T_{n,q,\alpha}(t^k;x).$ That is, 
 $\lambda^{(\alpha,n)}_{k,q}=a(k,k)$, and hence 
$$
T_{n,q,\alpha}(t^k,x)=\lambda^{(\alpha,n)}_{k,q}x^k+P^{(\alpha,n)}_{k-1}(x)
$$
where $P^{(\alpha,n)}_{k-1}(x)$ is a polynomial of degree $k-1.$
\end{remark}

\section{Eigenvalues and eigenvectors of $T_{n,q,\alpha}$}

In  this part, the eigenvalues and the corresponding eigenvectors of $T_{n,q,\alpha}$ are found. The coefficients of the eigenvectors are given recursively. For some specific values of $n$, the eigenvalues are plotted.

\begin{lemma}
For all $q>0$ and $\alpha \in [0,1] $, the operator $T_{n,q,\alpha}$ has $n+1$ linearly independent monic eigenvectors 
$p_{k,q}^{(\alpha,n)}(x)$ of degree $k=0,1,\ldots,n$ corresponding to the eigenvalues $\lambda_{0,q}^{(\alpha,n)}=\lambda_{1,q}^{(\alpha,n)}=1$ and 
\begin{align*}
\lambda_{k,q}^{(\alpha,n)}=\frac{q^{\frac{k(k-1)}{2}}[n-2]_q!}{[n-k]_q![n]_q^k}(\left(1-\alpha)[n-k]_q[n-1+k]_q+\alpha[n]_q[n-1]_q\right)
\end{align*}
for $k=2,3,\ldots$. 
\end{lemma}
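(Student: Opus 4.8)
The plan is to exploit the fact that $T_{n,q,\alpha}$ is degree reducing and leaves $\mathcal{P}_n$ invariant, so that in the monomial basis $\{1,x,\dots,x^n\}$ it is represented by a triangular matrix. Indeed, by the Remark following the previous lemma, $T_{n,q,\alpha}(t^k;x)=\lambda_{k,q}^{(\alpha,n)}x^k+P^{(\alpha,n)}_{k-1}(x)$ with $\deg P^{(\alpha,n)}_{k-1}\le k-1$, so the diagonal entries of this matrix are precisely $\lambda_{0,q}^{(\alpha,n)},\dots,\lambda_{n,q}^{(\alpha,n)}$, which are therefore the eigenvalues. The invariance property $T_{n,q,\alpha}(at+b;x)=ax+b$ immediately yields $T_{n,q,\alpha}(1;x)=1$ and $T_{n,q,\alpha}(t;x)=x$, so $\lambda_{0,q}^{(\alpha,n)}=\lambda_{1,q}^{(\alpha,n)}=1$ with monic eigenvectors $p_{0,q}^{(\alpha,n)}(x)=1$ and $p_{1,q}^{(\alpha,n)}(x)=x$.

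For $k\ge 2$ I would construct a monic eigenvector of degree exactly $k$ by the ansatz $p_{k,q}^{(\alpha,n)}(x)=x^k+\sum_{r=0}^{k-1}c_r x^r$ and imposing $T_{n,q,\alpha}(p_{k,q}^{(\alpha,n)};x)=\lambda_{k,q}^{(\alpha,n)}p_{k,q}^{(\alpha,n)}(x)$. Using $T_{n,q,\alpha}(t^j;x)=\sum_{r=0}^{j}a(r,j)x^r$ from the previous lemma and comparing coefficients of $x^r$, the top coefficient ($r=k$) is satisfied automatically because $a(k,k)=\lambda_{k,q}^{(\alpha,n)}$, while for $r<k$ one obtains the recursion
$$(\lambda_{k,q}^{(\alpha,n)}-\lambda_{r,q}^{(\alpha,n)})\,c_r=\sum_{j=r+1}^{k}c_j\,a(r,j),\qquad c_k=1,$$
which can be solved uniquely for $c_{k-1},c_{k-2},\dots,c_0$ downward, provided every factor $\lambda_{k,q}^{(\alpha,n)}-\lambda_{r,q}^{(\alpha,n)}$ with $0\le r<k$ is nonzero.

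Verifying this nonvanishing is the crux of the argument. For $2\le r<k$ it is exactly the distinctness furnished by Lemma \ref{eigdifq}. The remaining cases $r\in\{0,1\}$, where $\lambda_{0,q}^{(\alpha,n)}=\lambda_{1,q}^{(\alpha,n)}=1$, require showing $\lambda_{k,q}^{(\alpha,n)}\ne 1$ for $k\ge 2$; here I would use the factored form \eqref{43}. The product $\prod_{m=1}^{k-1}(1-[m]_q/[n]_q)$ lies strictly between $0$ and $1$ for $2\le k\le n$, and the prefactor $\alpha+(1-\alpha)\frac{[n-k]_q[n+k-1]_q}{[n]_q[n-1]_q}$ is a convex combination bounded above by $1$, since $[n-k]_q[n+k-1]_q\le[n-1]_q[n]_q$ (the $j=k-1$ instance of the inequality $(q^j-1)(q^{2k-j-1}-1)\ge0$ established in the proof of Lemma \ref{eigdifq}). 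Hence $\lambda_{k,q}^{(\alpha,n)}<1$ for $k\ge 2$, so indeed $\lambda_{k,q}^{(\alpha,n)}\ne\lambda_{0,q}^{(\alpha,n)}=\lambda_{1,q}^{(\alpha,n)}$.

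With all denominators nonzero, the recursion produces, for each $k=0,1,\dots,n$, a monic eigenvector $p_{k,q}^{(\alpha,n)}$ of degree exactly $k$ with eigenvalue $\lambda_{k,q}^{(\alpha,n)}$. Since these $n+1$ polynomials have pairwise distinct degrees, they are automatically linearly independent, which completes the argument. The only genuinely delicate point is the handling of the degenerate eigenvalue $\lambda_{0,q}^{(\alpha,n)}=\lambda_{1,q}^{(\alpha,n)}=1$: linear independence of the eigenvectors is not a formal consequence of having distinct eigenvalues, but is recovered here both from the degree grading and from the separate verification that $1$ is not an eigenvalue for any degree $k\ge2$.
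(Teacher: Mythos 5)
Your proposal is correct and follows essentially the same route as the paper: a monic ansatz for $p_{k,q}^{(\alpha,n)}$ whose lower-order coefficients are determined by a lower-triangular linear system, uniquely solvable because the diagonal entries $\lambda_{k,q}^{(\alpha,n)}-\lambda_{r,q}^{(\alpha,n)}$ are nonzero. The only point where you go beyond the paper is the explicit verification that $\lambda_{k,q}^{(\alpha,n)}\neq\lambda_{1,q}^{(\alpha,n)}=1$ for $k\geq 2$ via the factored form \eqref{43}; the paper attributes this to Lemma~\ref{eigdifq}, whose statement formally covers only the indices $k=2,\ldots,n$, although the proof of that lemma does establish the comparison with $\lambda_{1,q}^{(\alpha,n)}$ as well.
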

\begin{proof}
The proof is clear for $k=0$ and $k=1$. For $k=2,3,\ldots$, we have 
$$
T_{n,q,\alpha}(t^k;x)=\lambda_{k,q}^{(\alpha,n)}x^k+P_{k-1}^{(\alpha,n)}(x),
$$
where $P_{k-1}^{(\alpha,n)}(x)$ is a polynomial of degree $k-1$. 
Let 
$$p_{k}^{(\alpha,n)}(x)=x^k+\beta_{k-1}x^{k-1}+\ldots+\beta_1x$$ 
stand for the monic eigenvector of $T_{n,q,\alpha}$ corresponding to $\lambda_{k,q}^{(\alpha,n)},$ that is, 
$$
T_{n,q,\alpha}(p_k^{(\alpha,n)}(t);x)=\lambda_{k,q}^{(\alpha,n)}p_{k}^{(\alpha,n)}(x).
$$
Since $T_{n,q,\alpha}$ is linear, this equality becomes
\begin{align*}
T_{n,q,\alpha}(t^k;x)+\beta_{k-1}T_{n,q,\alpha}(t^{k-1};x)+\cdots+\beta_1T_{n,q,\alpha}(t;x)&=\lambda_{m}^{(\alpha,n)}(x^k+\beta_{k-1}x^{k-1}+\cdots+\beta_{1}x).
\end{align*}
Comparing the coefficients of $x^m,$ $m=1,2,\ldots, k-1$, we get the system in the unknowns $\beta_1,$ $\beta_2,$ $\ldots,$ $\beta_{k-1},$ whose coefficient matrix is
$$
\Lambda=\begin{pmatrix}
\lambda_{k,q}^{(\alpha,n)}-\lambda_{k-1,q}^{(\alpha,n)}& 0 & 0 & \cdots & 0\\
 * & \lambda_{k,q}^{(\alpha,n)}-\lambda_{k-2,q}^{(\alpha,n)}& 0 & \cdots &0\\
*&*&\lambda_{k,q}^{(\alpha,n)}-\lambda_{k-3,q}^{(\alpha,n)} & \ddots&0\\
*&* & * &\dots&\lambda_{k,q}^{(\alpha,n)}-\lambda_{1,q}^{(\alpha,n)}
\end{pmatrix}.
$$
Clearly    
$$
\det(\Lambda)=(\lambda_{k,q}^{(\alpha,n)}-\lambda_{k-1,q}^{(\alpha,n)})(\lambda_{k,q}^{(\alpha,n)}-\lambda_{k-2,q}^{(\alpha,n)})\cdots(\lambda_{k,q}^{(\alpha,n)}-\lambda_{1,q}^{(\alpha,n)})\neq 0
$$
by Lemma ~\ref{eigdifq}. Thus, there exist unique numbers $\beta_1,\ldots,\beta_{k-1}$ and hence $p_{k}^{(\alpha,n)}(x)$ is an eigenvector of $T_{n,q,\alpha}$ corresponding to the eigenvalue $\lambda_{k,q}^{(\alpha,n)}$.
\end{proof}

\begin{theorem}\label{thmeigq}
The monic eigenvector $p_{k,q}^{(\alpha,n)}(x)$ of $T_{n,q,\alpha}$ associated with
$\lambda_{k,q}^{(\alpha,n)}$ is a polynomial of degree $k$ given by
\begin{align*}
p_{k,q}^{(\alpha,n)}(x) = \sum_{j=0}^k c_{n,q}(j, k) x^j,
\end{align*}
where $p_{0,q}^{(\alpha,n)}(x)=1,$ $p_{1,q}^{(\alpha,n)}(x)=x$ and
\begin{align*}
c_{n,q}(k-j, k) =
\frac{1}{\lambda_{k,q}^{(\alpha,n)}-\lambda_{k-j,q}^{(\alpha,n)}}\,\sum_{i=0}^{j-1} c_{n,q}(k-i, k) a_{n,q}(k-j,k-i) 
\end{align*}
for $j=1,2,\ldots, k$, $k=2,3,\ldots$. 
\end{theorem}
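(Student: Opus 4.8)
The plan is to read off the coefficients $c_{n,q}(m,k)$ directly from the eigen-equation, exploiting the fact that the action of $T_{n,q,\alpha}$ on the monomial basis is explicitly known and triangular. I would write
\[
p_{k,q}^{(\alpha,n)}(x)=\sum_{m=0}^{k}c_{n,q}(m,k)\,x^{m},\qquad c_{n,q}(k,k)=1,
\]
the last equality being the monic normalization. Applying $T_{n,q,\alpha}$, using its linearity, and substituting the expansion $T_{n,q,\alpha}(t^{m};x)=\sum_{r=0}^{m}a(r,m)\,x^{r}$ from the earlier lemma (the coefficients $a(r,m)$ there being what the statement writes $a_{n,q}(r,m)$), I would interchange the order of summation to obtain
\[
T_{n,q,\alpha}\big(p_{k,q}^{(\alpha,n)};x\big)=\sum_{r=0}^{k}\Big(\sum_{m=r}^{k}c_{n,q}(m,k)\,a(r,m)\Big)x^{r}.
\]

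Next I would impose the eigenrelation $T_{n,q,\alpha}(p_{k,q}^{(\alpha,n)};x)=\lambda_{k,q}^{(\alpha,n)}\,p_{k,q}^{(\alpha,n)}(x)$ and compare the coefficient of $x^{r}$, which gives, for each $r$,
\[
\sum_{m=r}^{k}c_{n,q}(m,k)\,a(r,m)=\lambda_{k,q}^{(\alpha,n)}\,c_{n,q}(r,k).
\]
The decisive manipulation is to peel off the diagonal term $m=r$ and invoke the identity $a(r,r)=\lambda_{r,q}^{(\alpha,n)}$ recorded in the Remark; this turns the relation into
\[
\big(\lambda_{k,q}^{(\alpha,n)}-\lambda_{r,q}^{(\alpha,n)}\big)\,c_{n,q}(r,k)=\sum_{m=r+1}^{k}c_{n,q}(m,k)\,a(r,m).
\]
Since $\lambda_{k,q}^{(\alpha,n)}-\lambda_{r,q}^{(\alpha,n)}\neq0$ for $0\le r<k$ by Lemma~\ref{eigdifq} (the cases $r=0,1$ are covered because $\lambda_{0,q}^{(\alpha,n)}=\lambda_{1,q}^{(\alpha,n)}=1$ differs from $\lambda_{k,q}^{(\alpha,n)}$ when $k\ge2$), I may divide and solve for $c_{n,q}(r,k)$. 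Substituting $r=k-j$ and $m=k-i$, so that $m=r+1,\dots,k$ corresponds to $i=0,\dots,j-1$, recasts the result as the claimed recursion, with $c_{n,q}(k,k)=1$ furnishing the initial value.

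I do not anticipate a genuine analytic obstacle here: existence and uniqueness of $p_{k,q}^{(\alpha,n)}$ are already secured by the preceding lemma, and what remains is linear bookkeeping on a triangular system. The points that need care are organizational rather than conceptual: correctly identifying $a(r,r)$ with $\lambda_{r,q}^{(\alpha,n)}$ so that the diagonal separates cleanly, checking that the recursion genuinely runs \emph{downward} (each $c_{n,q}(k-j,k)$ depends only on the already-determined higher-order coefficients $c_{n,q}(k-i,k)$ with $0\le i\le j-1$, anchored at $c_{n,q}(k,k)=1$), and performing the double reindexing $r\mapsto k-j$, $m\mapsto k-i$ without error. As a consistency check one may examine $r=0$: there the right-hand side vanishes since $a(0,m)=0$ for $m\ge1$ (the end-point interpolation property forces $T_{n,q,\alpha}(t^{m};0)=0$), which correctly yields $c_{n,q}(0,k)=0$ and reproduces the absence of a constant term in $p_{k,q}^{(\alpha,n)}$.
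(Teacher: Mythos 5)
Your proposal is correct and follows essentially the same route as the paper's own proof: expand the eigenvector in the monomial basis, use the triangular action $T_{n,q,\alpha}(t^m;x)=\sum_{r\le m}a_{n,q}(r,m)x^r$, compare coefficients, peel off the diagonal term $a_{n,q}(r,r)=\lambda_{r,q}^{(\alpha,n)}$, and divide using the distinctness of eigenvalues from Lemma~\ref{eigdifq}. Your added consistency check at $r=0$ and the explicit handling of $r=0,1$ are minor refinements not present in (but consistent with) the paper's argument.
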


\begin{proof} Let us write the eigenvector $p_{k,q}^{(\alpha,n)}(x)$ of $T_{n,q,\alpha}$ in the form
\begin{align*}
p_{k,q}^{(\alpha,n)}(x) = \sum_{r=0}^k c_{n,q}(r, k) x^r.
\end{align*} 
By the assumption that $c_{n,q}(k,k)=1$, the relation 
$$
T(p_{k,q}^{(\alpha,n)}; x)=\lambda_{k,q}^{(\alpha,n)}p_{k,q}^{(\alpha,n)}(x)
$$ 
implies
\begin{align*}
\lambda_{k,q}^{(\alpha,n)} \sum_{s=0}^k c_{n,q}(s, k) x^s&= \sum_{r=0}^k c_{n,q}(r, k) T(t^r; x) \\
&= \sum_{r=0}^k c_{n,q}(r, k) \sum_{i=0}^{r} a_{n,q}(i, r)x^i\\ 
&= \sum_{s=0}^k \sum_{r=s}^{k} c_{n,q}(r, k) a_{n,q}(s, r)x^s,
\end{align*}
which leads to
\begin{align}\label{b}
\lambda_{k,q}^{(\alpha,n)}c_{n,q}(s, k) =\sum_{r=s}^{k} c_{n,q}(r, k) a_{n,q}(s, r).
\end{align}
Substituting $s=k-j$ and $r=k-i$ on \eqref{b}, we obtain
\begin{align}\label{b.}
\lambda_{k,q}^{(\alpha,n)}c_{n,q}(k-j, k)&=\sum_{i=0}^{j} c_{n,q}(k-i, k) a_{n,q}(k-j, k-i)\nonumber\\
&=c_{n,q}(k-j, k) a_{n,q}(k-j, k-j)+\sum_{i=0}^{j-1} c_{n,q}(k-i, k) a_{n,q}(k-j, k-i).
\end{align}
%$$\lambda_{k,q}^{(\alpha,n)}c_{n,q}(k-j, k)-c_{n,q}(k-j, k) a_{n,q}(k-j, k-i)=
%\sum_{i=0}^{j-1} c_{n,q}(k-i, k) a_{n,q}(k-j, k-i).
%$$
%$$c_{n,q}(k-i, k)(\lambda_{k,q}^{(\alpha,n)}-\lambda_{k-j,q}^{(\alpha,n)})=
%\sum_{i=0}^{j-1} c_{n,q}(k-i, k) a_{n,q}(k-j, k-i).
%$$
It is seen from ~\eqref{b.} that
$$
c_{n,q}(k-j, k) = \frac{1}{\lambda_{k,q}^{(\alpha,n)}-\lambda_{k-j,q}^{(\alpha,n)}}
\sum_{i=0}^{j-1} c_{n,q}(k-i, k) a_{n,q}(k-j, k-i).
$$
The proof of Theorem ~\ref{thmeigq} is completed
\end{proof}

\begin{example}
As mentioned before $p_{0,q}^{(\alpha,n)}(x)=1$ and $p_{1,q}^{(\alpha,n)}(x)=x$ for all $n.$ Also, by the endpoint interpolation, one can easily derive that
$p_{2,q}^{(\alpha,n)}(x)=x^2-x$ for all $n.$ For $n=3$ and $k=3,$ one has
$$
p_{3,q}^{(\alpha,3)}(x)=x^3+a_2x^2+a_1x
$$
where
\begin{align*}
a_2&=-\frac{(1-\alpha)q^4+(2-\alpha)q^3+3q^2+(2\alpha+1)q+2}{(1-\alpha)q^4+q^3+2q^2+(1+\alpha)q+1}\\
a_1&=\frac{(1-\alpha)q^3+q^2+\alpha q+1}{(1-\alpha)q^4+q^3+2q^2+(1+\alpha)q+1}.
\end{align*}
The graph of $p_{3,q}^{(\alpha,3)}(x)$ for $\alpha=0.4$ and various values $q$ is plotted on Figure \ref{fig1}. 
Also, the graph of $p_{3,q}^{(\alpha,3)}(x)$ for fixed $q$ and variable $\alpha$ is depicted on Figure \ref{fig2}.  

\begin{figure}[!ht]
\begin{center}
{\includegraphics[width=0.8\textwidth]{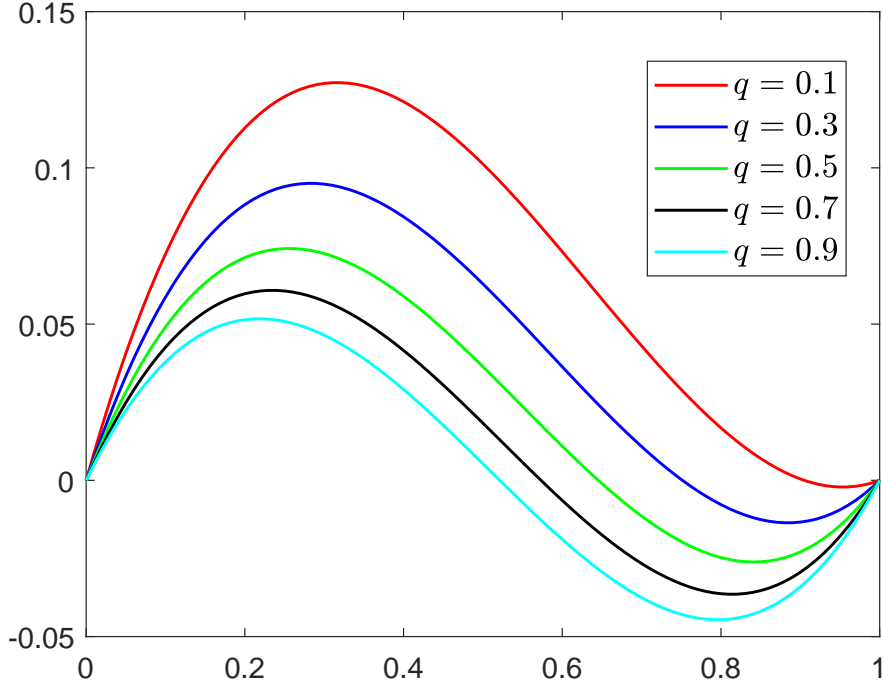}} 
\caption{The eigenvector $p_{3,q}^{(\alpha,3)}(x)$ for $\alpha=0.4$ and different values of $q.$} 
\label{fig1}
\end{center}\vspace{-4mm}
\end{figure}

\begin{figure}[!ht]
\begin{center}
{\includegraphics[width=0.8\textwidth]{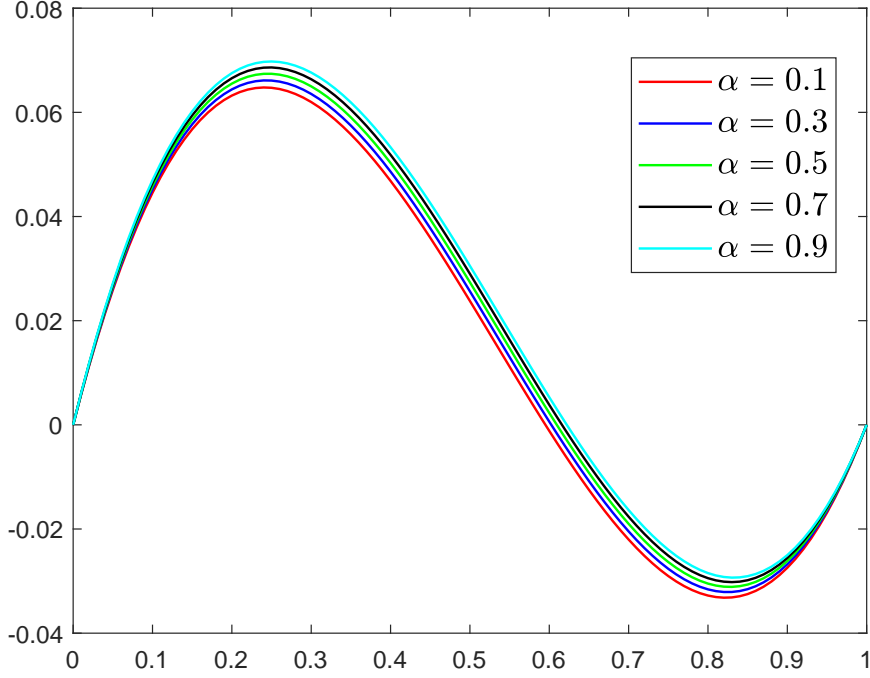}} 
\caption{The eigenvector $p_{3,q}^{(\alpha,3)}(x)$ for $q=0.6$ different values of $\alpha.$} 
\label{fig2}
\end{center}\vspace{-4mm}
\end{figure}

\end{example}

\subsection{The limit behavior of the eigenvalues and eigenvectors of $T_{n,q,\alpha}$}

\begin{lemma}
For $q\in(0, 1)$ and $k=2,3,\ldots,$ one has:
\begin{itemize}
\item [(i)] $\displaystyle \lim_{n\rightarrow\infty} a_{n,q}(r,k)=q^\frac{r(r-1)}{2}(1-q)^{k-r}S_q(k,r),$ $r=0,1,\ldots, k$
\end{itemize}
\begin{itemize}
\item [(ii)] $\displaystyle \lim_{n\rightarrow\infty} \lambda_{k,q}^{(\alpha,n)}=q^{k(k-1)/2}.$
\end{itemize}
\end{lemma}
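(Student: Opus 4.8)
The plan is to exploit the elementary fact that for $q\in(0,1)$ one has $[m]_q=(1-q^m)/(1-q)$, so every $q$-integer whose index grows with $n$ satisfies $[n+c]_q\to 1/(1-q)$ as $n\to\infty$, whereas $q$-integers of fixed index stay constant. Consequently each factor appearing in the closed forms \eqref{ark} and \eqref{43} either converges to the constant $1/(1-q)$ or is already fixed, and the whole computation reduces to bookkeeping of how many ``growing'' factors occur together with one algebraic simplification via the $q$-Stirling recurrence \eqref{str2}.

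For part (ii) I would start from the factored expression \eqref{43}. Since $[m]_q/[n]_q=(1-q^m)/(1-q^n)\to 1-q^m$ for each fixed $m$, the product $\prod_{m=1}^{k-1}(1-[m]_q/[n]_q)$ tends to $\prod_{m=1}^{k-1}q^m=q^{k(k-1)/2}$. At the same time the four growing $q$-integers in the ratio $[n-k]_q[n+k-1]_q/([n]_q[n-1]_q)$ all converge to $1/(1-q)$, so that ratio tends to $1$ and the bracket $\alpha+(1-\alpha)(\cdots)$ tends to $1$. Multiplying the two limits gives $q^{k(k-1)/2}$, as claimed.

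For part (i) I would read off the asymptotics of \eqref{ark} factor by factor. Writing $[n-2]_q!/[n-r]_q!$ as a product (or reciprocal product) of $q$-integers with indices near $n$, it contributes $(1-q)^{-(r-2)}$, while the factor $[n]_q^{-k}$ contributes $(1-q)^{k}$. The delicate step is the limit of the braces: each of its three summands carries exactly two growing $q$-integers, so the braces scale like $(1-q)^{-2}$, and after factoring this out the surviving combination is $(1-\alpha)\bigl(S_q(k+1,r+1)-[r+1]_qS_q(k,r+1)\bigr)+\alpha S_q(k,r)$. Here the recurrence \eqref{str2} collapses $S_q(k+1,r+1)-[r+1]_qS_q(k,r+1)$ to $S_q(k,r)$, so the $\alpha$-dependence cancels and the braces tend to $(1-q)^{-2}S_q(k,r)$. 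Collecting the three powers of $(1-q)$ yields exponent $-(r-2)+k-2=k-r$, which gives the stated limit $q^{r(r-1)/2}(1-q)^{k-r}S_q(k,r)$.

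The main obstacle is precisely this last simplification: one must track the powers of $(1-q)$ coming from three different sources without error and recognise that the $\alpha$-terms conspire to disappear through \eqref{str2}. I would also remark that part (ii) is in fact the specialisation $r=k$ of part (i), since $S_q(k,k)=1$ and $(1-q)^{k-k}=1$; this serves as a convenient consistency check on the bookkeeping.
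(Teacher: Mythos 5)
Your proposal is correct and follows the same route as the paper, which simply asserts that everything ``follows immediately'' from $\lim_{n\to\infty}[n]_q=1/(1-q)$; you have carried out the bookkeeping the paper omits, including the key application of \eqref{str2} that collapses $S_q(k+1,r+1)-[r+1]_qS_q(k,r+1)$ to $S_q(k,r)$ and removes the $\alpha$-dependence. The observation that (ii) is the case $r=k$ of (i) is a sound consistency check consistent with the paper's remark that $\lambda^{(\alpha,n)}_{k,q}=a(k,k)$.
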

\begin{proof}
The proof follows, immediately, from the fact that $\lim_{n\to\infty}[n]_q=1/(1-q),$ for all $q\in(0, 1).$ 
\end{proof}

\begin{theorem}
Let $0 < q < 1$ and $p_{k,q}^{(\alpha,n)}(x),$ $k=0,1,\ldots, n$ be the monic eigenvectors of $T_{n,q,\alpha}$ given in Theorem \ref{thmeigq}. Then 
\begin{eqnarray*}
\lim_{n\to\infty} c_{n,q}(j,k)=b_q(j,k)
\end{eqnarray*}
holds for $0\leq j \leq k,$ $k=0,1,\dots$, where
\begin{eqnarray*}
b_q(j,k)=\left\{\begin{array}{cl}
1, & j=k, \\
0, & j=k-1=0,\\
\displaystyle \sum_{i=j+1}^k \frac{(1-q)^{i-j}S_q(i,k)}{q^{(k-j)(k+j-1)/2}-1} \ b_q(i, k), & \textrm{otherwise}.
\end{array}
\right.
\end{eqnarray*}
In other words, when $q\in (0, 1),$ we have
\begin{align*}
\lim_{n\to\infty} p_{k,q}^{(\alpha,n)}(x)=p_k(x)=\sum_{j=0}^k b_q(j,k)x^j
\end{align*}
 uniformly on $[0,1].$
\end{theorem}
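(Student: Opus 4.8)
The plan is to fix $k\ge 2$ (the cases $k=0,1$ being immediate from $p_{0,q}^{(\alpha,n)}=1$ and $p_{1,q}^{(\alpha,n)}=x$) and to argue by downward induction on the power index, passing to the limit $n\to\infty$ inside the recursion of Theorem~\ref{thmeigq}. Rewriting \eqref{b} by isolating the diagonal term $a_{n,q}(j,j)=\lambda_{j,q}^{(\alpha,n)}$, the recursion takes the form
\begin{equation*}
c_{n,q}(j, k)=\frac{1}{\lambda_{k,q}^{(\alpha,n)}-\lambda_{j,q}^{(\alpha,n)}}\sum_{i=j+1}^{k} c_{n,q}(i, k)\,a_{n,q}(j,i),\qquad 0\le j<k,
\end{equation*}
so that each $c_{n,q}(j,k)$ is determined by the coefficients $c_{n,q}(i,k)$ of strictly larger index $i$. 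Hence the existence of the limits can be propagated from $j=k$ downward.

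For the base case, $c_{n,q}(k,k)=1=b_q(k,k)$ for every $n$. For the inductive step I assume $c_{n,q}(i,k)\to b_q(i,k)$ for all $i$ with $j<i\le k$. Since $k$ is fixed, the sum above is finite, so it suffices to pass to the limit term by term. By the preceding lemma, $a_{n,q}(j,i)\to q^{j(j-1)/2}(1-q)^{i-j}S_q(i,j)$ while $\lambda_{k,q}^{(\alpha,n)}\to q^{k(k-1)/2}$ and $\lambda_{j,q}^{(\alpha,n)}\to q^{j(j-1)/2}$. The one point needing care before the quotient may be passed to the limit is that the limiting denominator is nonzero: since $0<q<1$ and the integer $t(t-1)$ takes equal values only for $t\in\{0,1\}$, one has $q^{k(k-1)/2}\neq q^{j(j-1)/2}$ for all $0\le j<k$ with $k\ge 2$, whence $\lambda_{k,q}^{(\alpha,n)}-\lambda_{j,q}^{(\alpha,n)}\to q^{k(k-1)/2}-q^{j(j-1)/2}\neq0$ (the finite-$n$ denominators being nonzero already by Lemma~\ref{eigdifq}).

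Passing to the limit then yields
\begin{equation*}
b_q(j,k)=\frac{1}{q^{k(k-1)/2}-q^{j(j-1)/2}}\sum_{i=j+1}^{k} b_q(i,k)\,q^{j(j-1)/2}(1-q)^{i-j}S_q(i,j),
\end{equation*}
and factoring $q^{j(j-1)/2}$ out of numerator and denominator and using the identity $\tfrac{k(k-1)}{2}-\tfrac{j(j-1)}{2}=\tfrac{(k-j)(k+j-1)}{2}$ reduces this to the recursion defining $b_q(j,k)$; the degenerate case $j=k-1=0$, where this denominator would vanish, is handled separately by $b_q(0,1)=\lim_n c_{n,q}(0,1)=0$. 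Finally, the uniform statement is immediate: for fixed $k$ there are only finitely many coefficients, each convergent, and $|x^{j}|\le 1$ on $[0,1]$, so $p_{k,q}^{(\alpha,n)}(x)=\sum_{j=0}^{k}c_{n,q}(j,k)x^{j}\to\sum_{j=0}^{k}b_q(j,k)x^{j}=p_k(x)$ uniformly on $[0,1]$.

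The main obstacle is organizational rather than computational: one must justify interchanging the limit with the recursion, which rests on the nonvanishing of the limiting denominator $q^{k(k-1)/2}-q^{j(j-1)/2}$ and on framing the argument as a finite downward induction, so that every coefficient limit invoked in the inductive step is already established. Once this scaffolding is in place, the remaining work—inserting the limits from the preceding lemma and simplifying the exponent of $q$—is routine.
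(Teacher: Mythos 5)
Your proof is correct and follows essentially the same route as the paper's: a finite downward induction on $j$, passing to the limit term by term in the recursion of Theorem~\ref{thmeigq} using $a_{n,q}(j,i)\to q^{j(j-1)/2}(1-q)^{i-j}S_q(i,j)$ and $\lambda_{k,q}^{(\alpha,n)}\to q^{k(k-1)/2}$, together with the (worth making explicit, as you do) observation that the limiting denominator $q^{k(k-1)/2}-q^{j(j-1)/2}$ is nonzero for $0<q<1$, $0\le j<k$, $k\ge 2$. Note that your computation yields $S_q(i,j)$ in the recursion for $b_q(j,k)$ rather than the $S_q(i,k)$ printed in the statement; your version is what the paper's own substitution $i\mapsto k-i$, $j\mapsto k-j$ actually produces (and is confirmed by the check $b_q(0,2)=0$, consistent with $p_2(x)=x^2-x$), so this is a typo in the stated formula rather than a defect of your argument.
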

\begin{proof}
$p_0^{(n)}(x)=p_0(x)=1$ and $p_1^{(n)}(x)=p_1(x)=x,$ by Theorem \ref{thmeigq}. So it is enough to consider the case $k\geq2.$
Suppose that
$\lim_{n\to\infty} c_{n, q}(k-i, k)=b_q(k-i, k),
i=0,\ldots,j-1,$ where $0\leq j \leq k.$ 

One can easily see that 
\begin{equation*}
\lim_{n\to\infty}\frac{a_{n,q}(k-j,k-i)}{\lambda_{k,q}^{(\alpha,n)}-\lambda_{k-j,q}^{(\alpha,n)}}=\frac{(1-q)^{j-i}S_q(k-i,k-j)}{q^{j(2k-j-1)/2}-1}.
\end{equation*}
Then 
\begin{align*}
\lim_{n\to\infty} c_{n,q}(k-j, k)=\sum_{i=0}^{j-1}\frac{(1-q)^{j-i}S_q(k-i, k-j)}{q^{j(2k-j-1)/2}-1}\ c_{n,q}(k-i, k).
\end{align*}
Substituting  $j$ by $k-j$ and $i$ by $k-i,$ we obtain
\begin{equation*}
b_q(j,k)=\sum_{i=j+1}^{k}\frac{(1-q)^{i-j}S_q(i, k)}{q^{(k-j)(k+j-1)/2}-1}b_q(i, k)
\end{equation*}
which completes the proof.
\end{proof}

\begin{lemma} \label{lemcn}
For $q>1,$ one has:
\begin{itemize}
\item [(i)] $\displaystyle \lim_{n\rightarrow\infty} \lambda_{k,q}^{(\alpha,n)}=1.$
\item [(ii)] 
$
\displaystyle \lim_{n\to \infty} \frac{a_{n,q}(k-j,k-i)}{\lambda_{k,q}^{(\alpha,n)}-\lambda_{k-j,q}^{(\alpha,n)}}=0
$
for $i<j-1,$ $j=1,2,\ldots,k-1.$
\item [(iii)] 
$$
\lim_{n\to \infty}\frac{a_{n,q}(k-j,k-j+1)}{\lambda_{k,q}^{(\alpha,n)}-\lambda_{k-j,q}^{(\alpha,n)}}
=-\frac{S_q(k-j+1,k-j)+(1-\alpha)q^{j-k}[k-j]_q[k-j+1]_q}{[k-1]_q+[k-2]_q+\cdots+[k-j]_q}
$$
for $j=1,2,\ldots,k-1.$
\end{itemize}
\end{lemma}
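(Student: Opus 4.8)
The natural small parameter here is $q^{-n}$. Indeed, for $q>1$ and fixed $m$ one has $[m]_q/[n]_q=(q^m-1)/(q^n-1)=\Theta(q^{-n})$, and a one-line computation from \eqref{prop1} yields the identity $[n-k]_q[n+k-1]_q-[n]_q[n-1]_q=-q^{n-k}[k-1]_q[k]_q$, so the bracket $\alpha+(1-\alpha)[n-k]_q[n+k-1]_q/([n]_q[n-1]_q)$ in \eqref{43} also differs from $1$ by a term of order $q^{-n}$. Every assertion then reduces to extracting leading $q^{-n}$-coefficients. Part (i) is immediate: in \eqref{43} each factor $1-[m]_q/[n]_q\to1$ and the bracket tends to $\alpha+(1-\alpha)=1$, so $\lambda_{k,q}^{(\alpha,n)}\to1$.

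For part (ii) I would count orders in powers of $q^{n}$. In \eqref{ark} the brace is a sum of products of two $q$-integers with argument near $n$, hence of order $q^{2n}$, while the prefactor obeys $[n-2]_q!/([n]_q^{k-i}[n-(k-j)]_q!)=\Theta(q^{(i-j-2)n})$; multiplying by the brace gives $a_{n,q}(k-j,k-i)=O(q^{(i-j)n})$. Since $\lambda_{k,q}^{(\alpha,n)}-\lambda_{k-j,q}^{(\alpha,n)}=\Theta(q^{-n})$ (with nonzero leading coefficient, as the expansion in (iii) shows), the quotient is $O(q^{(i-j+1)n})$, which tends to $0$ exactly when $i<j-1$. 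This settles (ii).

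Part (iii), where $i=j-1$, is the only genuine computation, and the plan is to find the leading $q^{-n}$-coefficient of numerator and denominator separately. Writing $\ell=k-j$ and inserting $S_q(\ell+2,\ell+1)=S_q(\ell+1,\ell)+[\ell+1]_q$ and $S_q(\ell+1,\ell+1)=1$ (from \eqref{str2}) into \eqref{ark}, together with $[n+\ell-1]_q-[n-1]_q=q^{n-1}[\ell]_q$, the brace of $a_{n,q}(\ell,\ell+1)$ telescopes and one is left with the exact identity
\[
a_{n,q}(\ell,\ell+1)=\frac{S_q(\ell+1,\ell)}{[n]_q}\,\lambda_{\ell,q}^{(\alpha,n)}+(1-\alpha)\,q^{\,n-1+\ell(\ell-1)/2}[\ell]_q[\ell+1]_q\,\frac{[n-2]_q!}{[n]_q^{\ell+1}[n-\ell-1]_q!},
\]
in which both summands are visibly $\Theta(q^{-n})$; letting $n\to\infty$ (using $\lambda_{\ell,q}^{(\alpha,n)}\to1$ and $1/[n]_q\sim(q-1)q^{-n}$) reads off the numerator's leading coefficient. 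The hard part will be the denominator: I would expand $1-\lambda_{k,q}^{(\alpha,n)}$ to first order in $q^{-n}$ from \eqref{43}, and the delicate point is that both the product-correction $\sum_{m=1}^{k-1}(q^m-1)$ and the $(1-\alpha)$-correction stemming from the bracket enter at the \emph{same} order $q^{-n}$, so both must be retained when forming $\lambda_{k,q}^{(\alpha,n)}-\lambda_{k-j,q}^{(\alpha,n)}$; the telescoping of the product part produces the factor $[k-1]_q+\cdots+[k-j]_q$, and the overall minus sign reflects that this difference is negative to leading order. Dividing the two leading coefficients then yields the limit in (iii). This last bookkeeping of the $\alpha$-correction is the step I would verify most carefully, since it alone fixes the constant.
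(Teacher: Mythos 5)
Your parts (i) and (ii) are correct and are essentially the paper's own argument in different clothing: the paper forms the quotient $[n]_q^iA_n$ over $[n]_q^j(B_n-C_n)-[n]_q^{j-1}([k-1]_q+\cdots+[k-j]_q)B_n+\mathcal{O}([n]_q^{j-2})$ and compares powers of $[n]_q$, which is exactly your count of powers of $q^{n}$. Your identity $[n-k]_q[n+k-1]_q-[n]_q[n-1]_q=-q^{n-k}[k-1]_q[k]_q$ is correct, and the nonvanishing of the leading coefficient of $\lambda_{k,q}^{(\alpha,n)}-\lambda_{k-j,q}^{(\alpha,n)}$, which you defer to (iii), does hold because the product-correction and the bracket-correction enter with the same sign.

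For (iii), your exact identity for $a_{n,q}(\ell,\ell+1)$ with $\ell=k-j$ checks out against \eqref{ark} and \eqref{str2}, but the step you promise at the end --- ``dividing the two leading coefficients then yields the limit in (iii)'' --- is the one that fails: it yields a \emph{different} constant, and the discrepancy lies in the statement, not in your method. A clean test case: for $k=2$, $j=1$ the endpoint interpolation $T_{n,q,\alpha}(t^2;1)=1$ forces $a_{n,q}(1,2)=1-\lambda_{2,q}^{(\alpha,n)}$, and $\lambda_{1,q}^{(\alpha,n)}=1$, so the quotient equals $-1$ for every $n$, $q$, $\alpha$, whereas the right-hand side of (iii) is $-\bigl(1+(1-\alpha)q^{-1}[2]_q\bigr)\neq-1$ for $\alpha<1$. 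The paper's proof goes astray at precisely the point you flagged as delicate: it discards $[n]_q^j(B_n-C_n)$ from the denominator even though $B_n-C_n=\Theta([n]_q^{-1})$ makes that term of the same order $[n]_q^{j-1}$ as the one retained (this is the $(1-\alpha)$-correction you insisted must be kept), and in addition the simplification of $\lim_nA_n$ drops a factor $q-1$, since $(1-\alpha)[\ell+1]_q(1-q^{-\ell})=(1-\alpha)(q-1)q^{-\ell}[\ell]_q[\ell+1]_q$ rather than $(1-\alpha)q^{-\ell}[\ell]_q[\ell+1]_q$. With both corrections your framework and the paper's agree (and give $-1$ in the test case), so carry your plan through to its actual conclusion and note that the constant in (iii) --- and hence the formula for $d_q(j,k)$ in the subsequent theorem --- must be amended accordingly for $\alpha\neq1$; for $\alpha=1$ both reduce to the known $q$-Bernstein result.
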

\begin{proof} The proof of (i) is obvious. For the proof of (ii) and (iii), one can write 
\begin{multline*}
a_{n,q}(r,k)=\frac{q^{\frac{r(r-1)}{2}}[n]_q!}{[n]_q^k[n-r]_q!}\bigg\{(1-\alpha)\frac{[n-r]_q[n-r+1]_q}{[n]_q[n-1]_q}S_q(k+1,r+1)\\
-(1-\alpha)\frac{[n-r]_q}{[n]_q}[r+1]_qS_q(k,r+1)+\alpha S_q(k,r)\bigg\}
\end{multline*}
and
\begin{eqnarray*}
\lambda_{k,q}^{(\alpha,n)}=\frac{q^{\frac{k(k-1)}{2}}[n]_q!}{[n]_q^k[n-k]_q!}\bigg\{(1-\alpha)\frac{[n-k]_q[n+k-1]_q}{[n]_q[n-1]_q}+\alpha\bigg\}
\end{eqnarray*}
then
\begin{align*}
\frac{a_{n,q}(k-j,k-i)}{\lambda_{k,q}^{(n,\alpha)}-\lambda_{k-j,q}^{(n,\alpha)}}&=\frac{\frac{q^{\frac{(k-j)(k-j-1)}{2}}[n]_q!}{[n]_q^{k-i}[n-k+j]_q!}A_n}{\frac{q^{\frac{k(k-1)}{2}}[n]_q!}{[n]_q^k[n-k]_q!}B_n-\frac{q^{\frac{(k-j)(k-j-1)}{2}}[n]_q!}{[n]_q^{k-j}[n-k+j]_q!}C_n}\nonumber\\
&=\frac{[n]_q^iA_n}{[n-k+j]_q\cdots[n-k+1]_qq^{j(2k-j-1)/2}B_n-[n]_q^jC_n},
\end{align*}
where
\begin{align*}
A_n&=(1-\alpha)\frac{[n-k+j]_q[n+k-j-1]_q}{[n]_q[n-1]_q}S_q(k-i+1,k-j+1)\\
&\quad-(1-\alpha)\frac{[n-k+j]_q}{[n]_q}[k-j+1]_qS_q(k-i,k-j+1)+\alpha S_q(k-i,k-j),\\
B_n&=(1-\alpha)\frac{[n-k]_q[n+k-1]_q}{[n]_q[n-1]_q}+\alpha
\end{align*}
and 
\begin{align*}
C_n=(1-\alpha)\frac{[n-k+j]_q[n+k-j-1]_q}{[n]_q[n-1]_q}+\alpha.
\end{align*}
Using \eqref{prop1}, one can write
\begin{align*}
\frac{a_{n,q}(k-j,k-i)}{\lambda_{k,q}^{(\alpha,n)}-\lambda_{k-j,q}^{(\alpha,n)}}&=\frac{[n]_q^iA_n}{([n]_q-[k-j]_q)([n]_q-[k-j-1]_q)\cdots([n]_q-[k-1]_q)B_n-[n]_q^jC_n}\\
&=\frac{[n]_q^iA_n}{[n]_q^j(B_n-C_n)-[n]_q^{j-1}([k-1]_q+[k-2]_q+\cdots+[k-j]_q)B_n+\mathcal{O}([n]_q^{j-2})},
\end{align*}
Note that 
\begin{align*}
\lim_{n\rightarrow\infty}A_n&=(1-\alpha)\big(S_q(k-i+1,k-j+1)-q^{-k+j}[k-j+1]S_q(k-i,k-j+1)\big)\\
&\quad+\alpha S_q(k-i,k-j)\\
&=S_q(k-j+1,k-j)+(1-\alpha)q^{-k+j}[k-j]_q[k-j+1]_q
\end{align*}
where \eqref{str2} is used. Also,
$\lim_{n\rightarrow\infty}B_n=1$ and $\lim_{n\rightarrow\infty}C_n=1$. 
It is obvious now that if $i<j-1,$ then 
$$
\lim_{n\to \infty} \frac{a_{n,q}(k-j,k-i)}{\lambda_{k,q}^{(\alpha,n)}-\lambda_{k-j,q}^{(\alpha,n)}}=0
$$
which is the claim in (ii).
Moreover, if $i=j-1,$ then we have 
\begin{align*}
\lim_{n\to \infty} \frac{a_{n,q}(k-j,k-i)}{\lambda_{k,q}^{(\alpha,n)}-\lambda_{k-j,q}^{(\alpha,n)}}=-\frac{S_q(k-j+1,k-j)+(1-\alpha)q^{-k+j}[k-j]_q[k-j+1]_q}{[k-1]_q+[k-2]_q+\cdots+[k-j]_q}
\end{align*}
which completes the proof.
\end{proof}

\begin{theorem}
For $q > 1$ and $0\leq j \leq k,$ we have
\begin{eqnarray*}
\lim_{n\to\infty} c_{n,q}(j,k)=d_q(j,k), 
\end{eqnarray*}
where
\begin{eqnarray*}
d_q(0, 1)= 0, \quad d_q(j,k)=\prod_{i=1}^{k-j}-\frac{S_q(k-i+1, k-i)+(1-\alpha)q^{-k+i}[k-i]_q[k-i+1]_q}{[k-1]_q+\cdots+[k-i]_q}
\end{eqnarray*}
\end{theorem}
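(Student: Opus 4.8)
The plan is to argue by induction, computing the limits of the coefficients $c_{n,q}(k-j,k)$ in order of increasing $j$, that is, in order of decreasing exponent, starting from the leading coefficient and using the recursion of Theorem~\ref{thmeigq} together with the limit evaluations of Lemma~\ref{lemcn}. The cases $k=0,1$ are immediate from Theorem~\ref{thmeigq}, since $p_{0,q}^{(\alpha,n)}(x)=1$ and $p_{1,q}^{(\alpha,n)}(x)=x$ for every $n$; so fix $k\geq 2$. The base case $j=0$ is trivial: each eigenvector is monic, so $c_{n,q}(k,k)=1$ for all $n$, whence $\lim_n c_{n,q}(k,k)=1=d_q(k,k)$, the empty product in the stated formula.

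For the inductive step I fix $j$ with $1\leq j\leq k-1$ and assume $\lim_n c_{n,q}(k-i,k)=d_q(k-i,k)$ for $i=0,1,\ldots,j-1$. Writing the recursion of Theorem~\ref{thmeigq} as
\begin{align*}
c_{n,q}(k-j,k)=\sum_{i=0}^{j-1}c_{n,q}(k-i,k)\,\frac{a_{n,q}(k-j,k-i)}{\lambda_{k,q}^{(\alpha,n)}-\lambda_{k-j,q}^{(\alpha,n)}},
\end{align*}
I pass to the limit inside this finite sum. By Lemma~\ref{lemcn}(ii) every ratio with $i<j-1$ tends to $0$, while the factors $c_{n,q}(k-i,k)$ stay bounded by the induction hypothesis, so all such terms disappear; only the term $i=j-1$ survives. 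For it, Lemma~\ref{lemcn}(iii) evaluates the limit of the ratio, and the induction hypothesis gives $c_{n,q}(k-j+1,k)\to d_q(k-j+1,k)$, which yields the scalar recursion
\begin{align*}
d_q(k-j,k)=d_q(k-j+1,k)\left(-\frac{S_q(k-j+1,k-j)+(1-\alpha)q^{\,j-k}[k-j]_q[k-j+1]_q}{[k-1]_q+[k-2]_q+\cdots+[k-j]_q}\right).
\end{align*}

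It remains to unfold this recursion from $d_q(k,k)=1$, which expresses $d_q(k-j,k)$ as the product of the displayed factors over the drops $1,\ldots,j$; the substitution $m=k-j$ together with renaming the product index to $i$ (so that the exponent $q^{\,j-k}$ becomes $q^{-k+i}$) reproduces the closed form in the statement. The two endpoints deserve a separate word. The value $d_q(0,1)=0$ is just $c_{n,q}(0,1)=0$, read off from $p_{1,q}^{(\alpha,n)}(x)=x$; and for the lowest coefficient $c_{n,q}(0,k)$ (the drop $j=k$, which lies outside the range of Lemma~\ref{lemcn}) one argues directly that $T_{n,q,\alpha}(t^r;x)$ carries no constant term, so $c_{n,q}(0,k)=0$ for all $n$, in agreement with the product, whose $i=k$ factor has vanishing numerator because $[0]_q=0$ and $S_q(1,0)=0$. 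The one genuinely delicate point is the inductive step: one must fix the processing order so that every $c_{n,q}(k-i,k)$ on the right is already known to converge, and then use the dichotomy of Lemma~\ref{lemcn}(ii)--(iii) to conclude that exactly one summand survives in the limit. Converting the resulting one-term recursion into the product is then routine.
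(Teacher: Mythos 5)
Your proof is correct and follows essentially the same route as the paper: strong induction on the drop $j$, passing to the limit in the recursion of Theorem~\ref{thmeigq}, using Lemma~\ref{lemcn}(ii) to kill all summands with $i<j-1$ and Lemma~\ref{lemcn}(iii) to evaluate the single surviving term, then unfolding the one-term recursion into the product. Your explicit treatment of the constant coefficient $c_{n,q}(0,k)$ (which falls outside the range $j\leq k-1$ of Lemma~\ref{lemcn}) is a small but genuine improvement on the paper's write-up, which leaves that endpoint implicit.
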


\begin{proof}
For $k=0$ and $k=1,$ there is nothing to prove. Assume that $k\geq 2,$ and use strong induction on $j.$ Since $c_{n,q}(k,k)=1,$ the statement is true for $j=k.$ 
Assume that $\lim_{n\to\infty} c_{n,q}(k-i,k)=d_q(k-i,k)$ for $i=0,1,\ldots,j-1.$ Then using Lemma \ref{lemcn} (i) and (ii), we obtain
\begin{align*}
\lim_{n\to\infty} c_{n,q}(k-j,k)&=\sum_{i=0}^{j-1}d_q(k-i,k)\lim_{n\to\infty}\frac{a_{n,q}(k-j,k-i)}{\lambda_{k,q}^{(\alpha,n)}-\lambda_{k,q}^{(\alpha,n)}}\\
&=-\frac{S_q(k-j+1, k-j)+(1-\alpha)q^{-k+j}[k-j]_q[k-j+1]_q}{[k-1]_q+\cdots+[k-i]_q}d_q(k-j+1,k)
\end{align*}
From the fact that 
$$d_q(k-j+1,k)=\prod_{i=1}^{j-1}-\frac{S_q(k-i+1, k-i)+(1-\alpha)q^{-k+i}[k-i]_q[k-i+1]_q}{[k-1]_q+\cdots+[k-i]_q}$$
then we get 
\begin{eqnarray*}
d_q(j,k)=\prod_{i=1}^{k-j}-\frac{S_q(k-i+1, k-i)+(1-\alpha)q^{-k+i}[k-i]_q[k-i+1]_q}{[k-1]_q+\cdots+[k-i]_q}
\end{eqnarray*}
which completes the the induction.
\end{proof}

\begin{remark} Note that when $q>1,$ the limiting coefficients depend on $\alpha$ unlike the case $0<q<1.$ 
\end{remark}

\end{document}